\documentclass[12pt]{article}
\usepackage{amsmath,amssymb}
\usepackage{amsthm}
\usepackage{graphicx,psfrag,epsf}
\usepackage{enumerate}
\usepackage{natbib}
\usepackage{multirow}
\usepackage{float}

\usepackage[
singlelinecheck=false 
]{caption}

\usepackage{bm}

\usepackage{url} 

\usepackage{bbm}

\usepackage{amsfonts}

\pdfminorversion=4
\newcommand{\blind}{0}

\addtolength{\oddsidemargin}{-.5in}%
\addtolength{\evensidemargin}{-.5in}%
\addtolength{\textwidth}{1in}%
\addtolength{\textheight}{1.3in}%
\addtolength{\topmargin}{-.8in}%

\theoremstyle{remark}
\newtheorem{theorem}{Theorem}
\newtheorem{corollary}{Corollary}[theorem]
\newtheorem{lemma}{Lemma}

\newtheorem{example}{Example}

\DeclareMathOperator{\E}{\mathbb{E}}

\begin{document}

\def\spacingset#1{\renewcommand{\baselinestretch}%
{#1}\small\normalsize} \spacingset{1}


\if0\blind
{
  \title{\bf Sharpening Jensen's Inequality}
 \author{J. G. Liao and Arthur Berg 
    \hspace{.2cm}\\
    Division of Biostatistics and Bioinformatics\\ Penn State University College of Medicine}
  \maketitle
} \fi

\if1\blind
{
  \bigskip
  \bigskip
  \bigskip
  \begin{center}
    {\LARGE\bf Title}
\end{center}
  \medskip
} \fi

\bigskip
\begin{abstract}
 This paper proposes a new sharpened version of the Jensen's inequality. The proposed new bound is simple and insightful, is broadly applicable by imposing minimum assumptions, and provides fairly accurate result in spite of its simple form. Applications to the moment generating function, power mean inequalities, and Rao-Blackwell estimation are presented. This presentation can be incorporated in any calculus-based statistical course.
\end{abstract}

\noindent%
{\it Keywords:}  Jensen gap, Power mean inequality, Rao-Blackwell Estimator, Taylor series
\vfill

\newpage
\spacingset{1.45} 

\section{Introduction}
Jensen's inequality is a fundamental inequality in mathematics and it underlies many important statistical proofs and concepts. Some standard applications include derivation of the arithmetic-geometric mean inequality, non-negativity of Kullback and Leibler divergence, and the convergence property of the expectation-maximization algorithm \citep{Dempster:1977aa}. Jensen's inequality is covered in all major statistical textbooks such as \citet[Section 4.7]{Casella:2002aa} and \citet[Section 4.2]{Wasserman:2013aa} as a basic mathematical tool for statistics.

Let $X$ be a random variable with finite expectation and let $\varphi(x)$ be a convex function, then Jensen's inequality \citep{Jensen:1906aa} establishes 
\begin{equation}
\label{eq:Jensen}
\mathbb{E}\left[\varphi \left(X\right)\right] - \varphi\left(\mathbb{E}\left[X\right]\right)\ge 0.
\end{equation}
This inequality, however, is not sharp unless $\text{var}(X)=0$ or $\varphi(x)$ is a linear function of $x$.  Therefore, there is substantial room for advancement. This paper proposes a new sharper bound for the Jensen gap $\E[\varphi(X)]-\varphi\left(\E[X]\right)$. Some other improvements of Jensen's inequality have been developed recently; see for example \cite{Walker:2014aa}, \cite{Abramovich:2016aa}; \cite{Horvath:2014aa} and references cited therein. Our proposed bound, however, has the following advantages. First, it has a simple, easy to use, and insightful form in terms of the second derivative $\varphi''(x)$ and $\text{var}(X)$.  At the same time, it gives fairly accurate results in the several examples below. Many previously published improvements, however, are much more complicated in form, much more involved to use, and can even be more difficult to compute than $E[\varphi(X)]$ itself as discussed in \cite{Walker:2014aa}. Second, our method requires only the existence of $\varphi''(x)$ and is therefore broadly applicable. In contrast, some other methods require $\varphi(x)$ to admit a power series representation with positive coefficients \citep{Abramovich:2016aa,Dragomir:2014aa,Walker:2014aa} or require $\varphi(x)$ to be super-quadratic \citep{Abramovich:2014aa}. Third, we provide both a lower bound and an upper bound in a single formula. 

We have incorporated the materials in this paper in our classroom teaching. With only slightly increased technical level and lecture time, we are able to present a much sharper version of the Jensen's inequality that significantly enhances students' understanding of the underlying concepts. 

\section{Main result}
\begin{theorem}
Let $X$ be a one-dimensional random variable with mean $\mu$, and $P(X\in(a,b))=1$, where $-\infty\le a<b\le \infty$.  Let $\varphi(x)$ is a twice differentiable function on $(a,b)$, and define function
\[
h(x;\nu)\triangleq\frac{\varphi \left(x\right)-\varphi \left(\nu \right)}{\left(x-\nu
\right)^2}-\frac{\varphi '\left(\nu \right)}{x-\nu}.
\]
Then
\begin{equation}
\label{eq:bounds}
\underset{x{\in}(a,b)}{\inf}\{h(x;\mu)\}\text{var}(X) \leq E\left[\varphi \left(X\right)\right]-\varphi\left(E[X]\right)\le \sup_{x{\in}(a,b)}\{h(x;\mu)\} \text{var}(X).
\end{equation}
\end{theorem}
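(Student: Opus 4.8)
The plan is to reduce the two-sided inequality to a pointwise sandwich that becomes an integral inequality after taking expectations. The engine is the trivial identity obtained by clearing the denominator in the definition of $h$: for every $x\in(a,b)$ with $x\neq\mu$,
\[
h(x;\mu)\,(x-\mu)^2 \;=\; \varphi(x)-\varphi(\mu)-\varphi'(\mu)(x-\mu).
\]
I would write $g(x)$ for the right-hand side, extended to $x=\mu$ by $g(\mu)=0$, so that $g$ is defined on all of $(a,b)$ and equals $h(x;\mu)(x-\mu)^2$ away from $\mu$. Taking expectations and using $\E[X-\mu]=0$ (legitimate since $\E|X|<\infty$, as $X$ has finite mean) gives the exact representation
\[
\E[\varphi(X)]-\varphi(\E[X]) \;=\; \E[g(X)].
\]
Thus the theorem is really a statement about $\E[g(X)]$.

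Next I would bound $g$ pointwise. Set $m=\inf_{x\in(a,b)}h(x;\mu)$ and $M=\sup_{x\in(a,b)}h(x;\mu)$. For $x\neq\mu$, dividing the identity by the positive number $(x-\mu)^2$ shows $m(x-\mu)^2\le g(x)\le M(x-\mu)^2$; at $x=\mu$ all three expressions vanish, so the same double inequality holds for \emph{every} $x\in(a,b)$. Taking expectations term by term then yields
\[
m\,\text{var}(X)\;\le\;\E[g(X)]\;\le\;M\,\text{var}(X),
\]
which, combined with the representation above, is exactly \eqref{eq:bounds}.

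Two points need care rather than cleverness, and I expect the bookkeeping around them to be the only real obstacle. First, the index set of the infimum and supremum nominally contains $x=\mu$, where the formula for $h(x;\mu)$ is of the form $0/0$; the natural convention is to assign $h(\mu;\mu)$ its limiting value, which by the definition of the second derivative equals $\varphi''(\mu)/2$, making $h(\,\cdot\,;\mu)$ continuous on $(a,b)$ — this (and writing $\varphi'(\mu)$ at all) is where twice differentiability is used, and by continuity including or excluding the point $\mu$ does not change $m$ or $M$. Second, since $\varphi$ is not assumed convex one should check that $\E[\varphi(X)]$ and $\E[g(X)]$ are well defined: if $m$ and $M$ are finite and $\text{var}(X)<\infty$, then $g(X)$ is squeezed between two integrable random variables and everything is finite; in the remaining degenerate or heavy-tailed cases the relevant bound is $\pm\infty$ and holds trivially. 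Apart from this, the argument is a one-line identity followed by a pointwise sandwich.
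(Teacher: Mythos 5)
Your proof is correct and follows essentially the same route as the paper's: both rewrite the Jensen gap exactly as $\E\left[h(X;\mu)(X-\mu)^2\right]$ and then sandwich $h(\cdot;\mu)$ between its infimum and supremum before taking expectations. The only difference is presentational --- the paper obtains the identity $\varphi(x)-\varphi(\mu)-\varphi'(\mu)(x-\mu)=h(x;\mu)(x-\mu)^2$ by invoking Taylor's theorem with a mean-value remainder, whereas you get it by clearing denominators in the definition of $h$; your version is if anything more elementary, and your treatment of the removable singularity at $x=\mu$ and of integrability is more careful than the paper's.
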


\begin{proof} Let $F(x)$ be the cumulative distribution function of $X$. Applying Taylor's theorem to $\varphi(x)$ about $\mu$ with a mean-value form of the remainder gives
\begin{equation*}
\varphi(x)=\varphi(\mu)+\varphi'(\mu)(x-\mu)+\frac{\varphi''(g(x))}{2}(x-\mu)^2,
\end{equation*}
where $g(x)$ is between $x$ and $\mu$.  Explicitly solving for $\varphi''(g(x))/2$ gives $\varphi''(g(x))/2=h(x;\mu)$ as defined above.  Therefore
\[
\begin{split}
\mathbb{E}\left[\varphi \left(X\right)\right] - \varphi\left(\mathbb{E}\left[X\right]\right)
&=\int_a^b \left\{\varphi(x)-\varphi(\mu)\right\}\,dF(x)\\
&=\int_a^b \left\{\varphi'(\mu)(x-\mu)+h(x;\mu)(x-\mu)^2\right\}\,dF(x)\\
&=\int_a^b h(x;\mu)(x-\mu)^2\,dF(x),\\
\end{split}
\]
and the result follows because $\inf_{x\in(a,b)} h(x;\mu)\le h(x;\mu)\le \sup_{x\in(a,b)} h(x;\mu)$.  
\end{proof}

Theorem 1 also holds when $\inf h(x;\mu)$ is replaced by $\inf \varphi''(x)/2$ and $\sup h(x;\mu)$ replaced by $\sup \varphi''(x)/2$ since
\[
\inf \frac{\varphi''(x)}{2}\le \inf h(x;\mu) \quad\text{and}\quad \sup \frac{\varphi''(x)}{2}\ge \sup h(x;\mu).
\] 
These less tight bounds are implied in the economics working paper \cite{Becker:2012aa}.  Our lower and upper bounds have the general form $J\cdot\text{var}(X)$, where $J$ depends on $\varphi$.  Similar forms of bounds are presented in \cite{Abramovich:2016aa,Dragomir:2014aa,Walker:2014aa}, but our $J$ in Theorem 1 is much simpler and applies to a wider class of $\varphi$.

Inequality \eqref{eq:bounds} implies Jensen's inequality when $\varphi''(x)\ge 0$. Note also that Jensen's inequality is sharp when $\varphi(x)$ is linear, whereas inequality \eqref{eq:bounds} is sharp when $\varphi(x)$ is a quadratic function of $x$.  

In some applications the moments of $X$ present in \eqref{eq:bounds} are unknown, although a random sample $x_1,\ldots,x_n$ from the underlying distribution $F$ is available.  A version of Theorem 1 suitable for this situation is given in the following corollary.

\begin{corollary}  Let $x_1,\ldots,x_n$ be any $n$ datapoints in $(-\infty,\infty)$, and let 
\label{cor}
\[
\bar{x}=\frac{1}{n}\sum_{i=1}^n x_i,\quad \overline{\varphi_x}=\frac{1}{n}\sum_{i=1}^n\varphi(x_i),\quad S^2=\frac{1}{n}\sum_{i=1}^n(x_i-\bar{x})^2.
\]
Then
\[
\inf_{x\in[a,b]}h(x;\bar{x})S^2\le \overline{\varphi_x}-\varphi(\bar{x})\le\sup_{x\in[a,b]}h(x;\bar{x})S^2,
\]
where $a=\min\{x_1,\ldots,x_n\}$ and $b=\max\{x_1,\ldots,x_n\}$.  
\end{corollary}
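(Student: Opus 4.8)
The plan is to recognize Corollary~\ref{cor} as the instance of Theorem~1 in which $X$ is replaced by a random variable $Y$ whose distribution is the empirical measure placing mass $1/n$ at each of $x_1,\dots,x_n$: then $\E[Y]=\bar x$, $\Var(Y)=S^2$, $\E[\varphi(Y)]=\overline{\varphi_x}$, and $P(Y\in[a,b])=1$. Rather than quote Theorem~1 verbatim (its statement is phrased for an open interval, while here the endpoints $a=\min_i x_i$ and $b=\max_i x_i$ are attained, and invoking it on a slightly larger open interval would only give a weaker inf/sup), I would replay its one-line Taylor argument for this discrete measure, which keeps everything over the closed interval $[a,b]$ on which $\varphi$ is assumed twice differentiable.

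Concretely, first apply Taylor's theorem with mean-value remainder to $\varphi$ about $\bar x$ at each datapoint, obtaining $\varphi(x_i)=\varphi(\bar x)+\varphi'(\bar x)(x_i-\bar x)+\tfrac12\varphi''(g_i)(x_i-\bar x)^2$ with $g_i$ between $x_i$ and $\bar x$, and solve for the remainder coefficient to get $\tfrac12\varphi''(g_i)=h(x_i;\bar x)$ whenever $x_i\neq\bar x$. Second, sum over $i$ and divide by $n$: the linear term contributes $\varphi'(\bar x)\cdot\frac1n\sum_i(x_i-\bar x)=0$ by the definition of $\bar x$, leaving
\[
\overline{\varphi_x}-\varphi(\bar x)=\frac1n\sum_{i=1}^n h(x_i;\bar x)\,(x_i-\bar x)^2 .
\]
Third, since every $x_i$ lies in $[a,b]$ we have $\inf_{x\in[a,b]}h(x;\bar x)\le h(x_i;\bar x)\le\sup_{x\in[a,b]}h(x;\bar x)$ for each $i$, and since each weight $(x_i-\bar x)^2$ is nonnegative, multiplying through and averaging yields the two inequalities with the common factor $\frac1n\sum_i(x_i-\bar x)^2=S^2$.

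The one point needing care — the analogue of the ``$g(x)$ between $x$ and $\mu$'' bookkeeping in the proof of Theorem~1 — is the removable singularity of $h(\cdot;\bar x)$ at $\bar x$, since some $x_i$ may equal $\bar x$ exactly. I would handle this via the algebraic identity $h(x;\bar x)(x-\bar x)^2=\varphi(x)-\varphi(\bar x)-\varphi'(\bar x)(x-\bar x)$, whose right-hand side is defined for all $x$ and vanishes at $x=\bar x$, so each summand in the displayed identity is unambiguous; adopting the convention $h(\bar x;\bar x):=\varphi''(\bar x)/2$ (the limiting value, which is also the Taylor remainder coefficient at that point) makes the inf and sup over the closed interval $[a,b]$ — which does contain $\bar x$ — well defined and keeps the bounding step valid at any tied index. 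No genuine obstacle remains beyond this: the argument is otherwise a direct transcription of the proof of Theorem~1 with the empirical distribution in place of $F$.
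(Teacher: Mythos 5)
Your proof is correct and takes essentially the same route as the paper's, which simply introduces the discrete random variable with $P(X=x_i)=1/n$, observes that its mean, variance, and $E[\varphi(X)]$ equal $\bar{x}$, $S^2$, and $\overline{\varphi_x}$, and then cites Theorem 1. Your extra care about the closed interval $[a,b]$ (whose endpoints the empirical measure actually charges, so Theorem 1 as literally stated for an open interval does not apply verbatim) and about the removable singularity of $h(\cdot\,;\bar{x})$ at $\bar{x}$ is a legitimate refinement that the paper silently glosses over, but it does not alter the underlying argument.
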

\begin{proof}
Consider the discrete random variable $X$ with probability distribution $P(X=x_i)=1/n$, $i=1,\ldots,n$.  We have $E[X]=\bar{x}$, $E[\varphi(X)]=\overline{\varphi_x}$, and $\text{var}(X)=S^2$.  Then the corollary follows from application of Theorem 1.  
\end{proof}

\begin{lemma}
If $\varphi '\left(x\right)$ is convex, then $h(x;\mu)$ is monotonically increasing in $x$, and if $\varphi '\left(x\right)$ is concave, then $h(x;\mu)$ is monotonically decreasing in $x$.  
\end{lemma}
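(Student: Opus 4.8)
The plan is to differentiate $h(x;\mu)$ in $x$, reduce the sign of the derivative to a chord inequality for $\varphi'$, and deal with the removable singularity at $x=\mu$ by continuity. Concretely, I would set $R(x)=\varphi(x)-\varphi(\mu)-\varphi'(\mu)(x-\mu)$, the first-order Taylor remainder, so that $h(x;\mu)=R(x)/(x-\mu)^2$ with $R(\mu)=R'(\mu)=0$ and $R'(x)=\varphi'(x)-\varphi'(\mu)$; note that $R'$ is convex precisely when $\varphi'$ is, since the two differ by a constant. For $x\neq\mu$ the quotient rule gives
\[
\frac{\partial}{\partial x}h(x;\mu)=\frac{(x-\mu)R'(x)-2R(x)}{(x-\mu)^3},
\]
so it suffices to show that $(x-\mu)R'(x)-2R(x)$ has the same sign as $x-\mu$ when $\varphi'$ is convex (and the opposite sign when $\varphi'$ is concave).

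The key step is the chord estimate. Take $x>\mu$ first. Because $R'$ is convex and vanishes at $\mu$, every $t$ between $\mu$ and $x$ satisfies $R'(t)\le\frac{t-\mu}{x-\mu}R'(x)$; integrating this from $\mu$ to $x$ — legitimate since $R'$ is continuous and $R(x)=\int_\mu^{x}R'(t)\,dt$ — yields $R(x)\le\tfrac12(x-\mu)R'(x)$, i.e. $(x-\mu)R'(x)-2R(x)\ge0$, which is the required sign because $(x-\mu)^3>0$. For $x<\mu$ the same computation, with the chord now lying above $R'$ on $[x,\mu]$, reverses to $(x-\mu)R'(x)-2R(x)\le0$, again matching the sign of $(x-\mu)^3<0$ after division. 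Hence $\partial_x h(x;\mu)\ge0$ on $(a,b)\setminus\{\mu\}$. Since $\varphi$ is twice differentiable at $\mu$, the expansion $R(x)=\tfrac12\varphi''(\mu)(x-\mu)^2+o((x-\mu)^2)$ shows $h(x;\mu)\to\tfrac12\varphi''(\mu)$ as $x\to\mu$, so $h(\cdot;\mu)$ is continuous on all of $(a,b)$, and a continuous function whose derivative is nonnegative off a single point is nondecreasing. The concave case is obtained by reversing every inequality.

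I do not expect a substantive obstacle; the only delicate points are the sign bookkeeping in the chord estimate for $x<\mu$ versus $x>\mu$ and the routine continuity of $h$ at $x=\mu$. A shorter alternative would start from the representation $h(x;\mu)=\int_0^1(1-s)\varphi''(\mu+s(x-\mu))\,ds$, obtained via Taylor's theorem with the integral form of the remainder, whence monotonicity of $h$ in $x$ drops out once one observes that convexity of $\varphi'$ is the same as monotonicity of $\varphi''$; I would keep the chord argument as primary, however, since it uses nothing beyond the hypotheses already in force.
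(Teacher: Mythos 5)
Your proposal is correct and follows essentially the same route as the paper: differentiate $h(x;\mu)$, reduce the sign of the derivative to the trapezoid-type inequality $\tfrac12\bigl(\varphi'(x)+\varphi'(\mu)\bigr)\ge\frac{\varphi(x)-\varphi(\mu)}{x-\mu}$ (your $(x-\mu)R'(x)\ge 2R(x)$ is this same inequality rewritten via the Taylor remainder), and prove it by integrating the chord bound that convexity of $\varphi'$ provides. The only differences are cosmetic extra care: you treat $x<\mu$ explicitly rather than by a ``without loss of generality,'' and you verify continuity of $h$ at the removable singularity $x=\mu$, which the paper leaves implicit.
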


\begin{proof} We prove that $h'(x;\mu)\ge0$ when $\varphi'(x)$ is convex. The analogous result for concave $\varphi'(x)$ follows similarly.  Note that 
\begin{equation*}
\frac{\mathit{dh}(x;\mu)}{\mathit{dx}}=\frac{\frac{\varphi '\left(x\right)+\varphi '\left(\mu \right)} 2-\frac{\varphi
\left(x\right)-\varphi \left(\mu \right)}{x-\mu }}{\frac{1}{2}\left(x-\mu \right)^2,}
\end{equation*}
so it suffices to prove 
\begin{equation*}
\frac{\varphi '\left(x\right)+\varphi '\left(\mu \right)} 2\ge \frac{\varphi
\left(x\right)-\varphi \left(\mu \right)}{x-\mu }.
\end{equation*}
Without loss of generality we assume $x>\mu$.  Convexity of $\varphi'(x)$ gives 
\begin{equation*}
\varphi '\left(y\right){\leq}\varphi'\left(\mu\right)+\frac{\varphi'\left(x \right)-\varphi'\left(\mu\right)}{x
-\mu}(y-\mu)
\end{equation*}
for all  $y\in(\mu,x)$.  Therefore we have 
\begin{equation*}
\begin{split}
\varphi \left(x\right)-\varphi \left(\mu \right)&=\int _\mu^x\varphi '\left(y\right)\mathit{dy}\\
&\le\int _\mu^x\left\{
\varphi'\left(\mu\right)+\frac{\varphi'\left(x \right)-\varphi'\left(\mu\right)}{x
-\mu}(y-\mu)
\right\}
\mathit{dy}\\
&=\frac{\varphi '\left(x\right)+\varphi '\left(\mu \right)} 2\left(x-\mu
\right).
\end{split}
\end{equation*}
and the result follows.  
\end{proof}

Lemma 1 makes Theorem 1 easy to use as the follow results hold:
\[
\begin{split}
&\begin{cases}
\begin{split}
&\inf h(x;\mu)= \lim_{x\rightarrow a}h(x;\mu)\\
&\sup h(x;\mu)= \lim_{x\rightarrow b}h(x;\mu)
\end{split}, &\text{when } \varphi'(x) \text{ is convex}\\
\end{cases}\\
&\begin{cases}
\begin{split}
&\inf h(x;\mu)= \lim_{x\rightarrow b}h(x;\mu)\\
&\sup h(x;\mu)= \lim_{x\rightarrow a}h(x;\mu)
\end{split}, &\text{when } \varphi'(x) \text{ is concave.}
\end{cases}
\end{split}
\]
Note the limits of $h(x;\mu)$ can be either finite or infinite. The proof of Lemma 1 borrows ideas from \cite{Bennish:2003aa}.  Examples of functions $\varphi(x)$ for which $\varphi'$ is convex include $\varphi(x)=\exp(x)$ and $\varphi(x)=x^p$ for $p\ge2$ or $p\in(0,1]$.  
  Examples of functions $\varphi(x)$ for which $\varphi'$ is concave include $\varphi(x)=-\log x$ and $\varphi(x)=x^p$ for $p<0$ or $p\in[1,2]$.

\section{Examples}
\begin{example}[Moment Generating Function] For any random variable $X$ supported on $(a,b)$ with a finite variance, we can bound the moment generating function $E[e^{tX}]$ using Theorem 1 to get 
\[
\inf_{x{\in}(a,b)}\{h(x;\mu)\} \text{var}(X)\le E[e^{tX}]-e^{tE[X]}\le\sup_{x{\in}(a,b)}\{h(x;\mu)\} \text{var}(X),
\]
where 
\[
h(x;\mu)=\frac{e^{tx}-e^{t\mu}}{(x-\mu)^2}-\frac{te^{t\mu}}{x-\mu}.
\]
For $t>0$ and $(a,b)=(-\infty,\infty)$, we have 
\[
\inf h(x;\mu)=\lim_{x\rightarrow-\infty}h(x;\mu)=0\quad \text{and}\quad \sup h(x;\mu)=\lim_{x\rightarrow\infty}h(x;\mu)=\infty.
\]
So Theorem 1 provides no improvement over Jensen's inequality.  However, on a finite domain such as a non-negative random variable with $(a,b)=(0,\infty)$, a significant improvement in the lower bound is possible because
\[
\inf h(x;\mu)=h(0;\mu)=\frac{1-e^{t\mu}+t\mu e^{t\mu}}{\mu^2}>0.
\]
Similar results hold for $t<0$.  We apply this to an example from \cite{Walker:2014aa}, where $X$ is an exponential random variable with mean 1 and $\varphi(x)=e^{tx}$ with $t=1/2$.   Here the actual Jensen's gap is $\E[e^{tX}]-e^{t\E[X]}= 2-\sqrt{e}\approx.351$. 
Since $\text{var}(X)=1$, we have  
\[ 
.176\approx h(0;\mu)\le \E[e^{tX}]-e^{t\E[X]}\le \lim_{x\rightarrow\infty }h(x;\mu)=\infty.
\]
The less sharp lower bound using $\inf \varphi''(x)/2$ is 0.125. Utilizing elaborate approximations and numerical optimizations \cite{Walker:2014aa} yielded a more accurate lower bound of 0.271.  

\end{example}

\begin{example}[Arithmetic vs Geometric Mean] Let $X$ be a positive random variable on interval $(a,b)$ with mean $\mu$. Note that $-\log(x)$ is convex whose derivative is concave.  Applying Theorem 1 and Lemma 1 leads to 
\[
 \lim_{x\rightarrow b}h(x;\mu) \text{var}(X) \le -E\{\log(X)\} + \log\mu\le \lim_{x\rightarrow a}h(x;\mu)\text{var}(X),
\]
where
\[
h(x;\mu)=\frac{-\log x+\log \mu}{(x-\mu)^2}+\frac{1}{\mu(x-\mu)}.
\]

Now consider a sample of $n$ positive data points $x_1,\ldots,x_n$.  Let $\bar{x}$ be the arithmetic mean and $\bar{x}_g=(x_1x_2\cdots x_n)^{\frac{1}{n}}$ be the geometric mean.  Applying Corollary \ref{cor} gives
\[
\exp\{S^2 h(b;\bar{x})\} \le \frac{\bar{x}}{\bar{x}_g}\le \exp\{S^2 h(a;\bar{x})\},
\]
where $a$, $b$, $S^2$ are as defined in Corollary \ref{cor}.  To give some numerical results, we generated 100 random numbers from uniform distribution on [10,100].  For these 100 numbers, the arithmetic mean $\bar{x}$ is 54.830 and the geometric mean $\bar{x}_g$ is 47.509.  The above inequality becomes
\[
1.075\le \frac{\bar{x}}{(x_1x_2\cdots x_n)^{\frac{1}{n}}} = 1.154\le 1.331,
\]
which are fairly tight bounds. Replacing $h(x_n;\bar{x})$ by $\varphi''(x_n)/2$ and $h(x_1;\bar{x})$ by $\varphi''(x_1)/2$ leads to a less accurate lower bound 1.0339 and upper bound 21.698.  
\end{example}

\begin{example}[Power Mean] Let $X$ be a positive random variable on a positive interval $(a, b)$ with mean $\mu$. For any real number $s\not=0$, define the power mean as 
\[
M_s(X)=\left(E X^s\right)^{1/s}
\]
Jensen's inequality establishes that $M_s(X)$ is an increasing function of $s$. We now give a sharper inequality by applying Theorem 1. Let $r\not=0$, $Y=x^r$, $\mu_y=EY$, $p=s/r$ and $\varphi(y)=y^p$.    Note that $E X^s = E\{\varphi(Y)\}$.  Applying Theorem 1 leads to 
\[
\inf h(y;\mu_y)\text{var}(Y)\le E[X^s] - (E X^r)^p \le \sup h(y;\mu_y) \text{var}(Y),
\]
where
\[
h(y;\mu_y)=\frac{y^p-\mu_y^p}{(y-\mu_y)^2}-\frac{p \mu_y^{p-1}}{y-\mu_y}.
\]
To apply Lemma 1, note that $\varphi'(y)$ is convex for $p\ge 2$ or $p\in(0,1]$ and is concave for $p<0$ or $p\in[1,2]$ as noted in Section 2.  

Applying the above result to the case of $r=1$ and $s=-1$, we have $Y=X$, $p=-1$. Therefore
\[
\left((EX)^{-1} +\lim_{y\rightarrow a}h(y;\mu_y)\text{var}(X)\right)^{-1}\le \left(E X^{-1}\right)^{-1} \le \left((EX)^{-1} + \lim_{y\rightarrow b}h(y;\mu_y)\text{var}(X)\right)^{-1}.
\]

For the same sequence $x_1,\ldots, x_n$ generated in Example 2, we have $\bar{x}_{\text{harmonic}}=39.113$.  Applying Corollary \ref{cor} leads to 
\[
25.337\le \bar{x}_{\text{harmonic}}=39.113\le 48.905.
\]
Note that the upper bound 48.905 is much smaller than the arithmetic mean $\bar{x}=54.830$ by the Jensen's inequality.  Replacing $h(b;\bar{x})$ by $\varphi''(b)/2$ and $h(a;\bar{x})$ by $\varphi''(a)/2$ leads to a less accurate lower bound 0.8298 and 51.0839.

\end{example}

In a recent article published in the \emph{American Statistician}, \cite{Carvalho:2016aa} revisited Kolmogorov's formulation of generalized mean as 
\begin{equation}
\label{eq:K2}
E_\varphi(X)=\varphi^{-1}(E\left[\varphi(X)\right]),
\end{equation}
where $\varphi$ is a continuous monotone function with inverse $\varphi^{-1}$. The Example 2 corresponds to $\varphi(x)=-\log(x)$ and Example 3 corresponds to $\varphi(x)=x^s$.  We can also apply Theorem 1 to bound $\varphi^{-1}(E \varphi(X))$ for a more general function $\varphi(x)$.  

\begin{example}[Rao-Blackwell Estimator]
Rao-Blackwell theorem (Theorem 7.3.17 in Casella and Berger, 2002; Theorem 10.42 in Wasserman, 2013) is a basic result in statistical estimation. Let  $\hat\theta$ be an estimator of $\theta$,  $L(\theta, \hat\theta)$ be a loss function convex in $\hat\theta$, and $T$ a sufficient statistic. Then the Rao-Blackwell estimator, $\hat\theta^*=E[\hat\theta\mid T]$, satisifies the following inequality in risk function 
\begin{equation}
\label{eq:rao}
E[L(\theta, \hat\theta)]\ge E[L(\theta, \hat\theta^*)].
\end{equation}
We can improve this inequality by applying Theorem 1 to $\varphi(\hat\theta)=L(\theta, \hat\theta)$ with respect to the conditional distribution of $\hat\theta$ given $T$:
\[
E[L(\theta, \hat\theta)\mid T] - L(\theta, \hat\theta^*)\ge \inf_{x\in(a,b)} h(x;\hat\theta^*) \text{var}(\hat\theta\mid T),
\]  
where function $h$ is defined as in Theorem 1 for $\varphi(\hat\theta)$ and $P(\hat\theta\in(a,b)\mid T)=1$. Further taking expectations over $T$ gives
\[
E[L(\theta, \hat\theta)] - E[L(\theta, \hat\theta^*)]\ge E\left[\inf_{x\in(a,b)} h(x;\hat\theta^*)\text{var}(\hat\theta\mid T)\right].
\]
In particular for square-error loss, $L(\theta, \hat\theta)=(\hat\theta-\theta)^2$, we have
\[
E[(\theta-\hat\theta)^2] - E[(\theta-\hat\theta^*)^2] = E\left[\text{var}(\hat\theta\mid T)\right].
\]
Using the original Jensen's inequality only establishes the cruder inequality in Equation \eqref{eq:rao}.

\end{example}

\section{Improved bounds by partitioning}

As discussed in Example 1 above, Theorem 1 does not improve on Jensen's inequality if $\inf h(x;\mu) = 0$.  In such cases, we can often sharpen the bounds by partitioning the domain $(a,b)$ following an approach used in \cite{Walker:2014aa}. Let 
\begin{equation*}
a=x_0<x_1<{\cdots}<x_m=b,
\end{equation*}
$I_j=\left[x_{j-1},x_j\right)$, $\eta_j=P(X\in I_j)$, and $\mu_j=E(X\mid X\in I_j)$. It follows from the law of total expectation that 

\begin{equation*}
\label{eq:expectation}
\begin{split}
E[\varphi(X)]&=\sum_{j=1}^m \eta_j E[\varphi(X) \mid X\in I_j]\\
&=\sum_{j=1}^m \eta_j \varphi(\mu_j)+ \sum_{j=1}^m \eta_j \left(E[\varphi(X) \mid X\in I_j]-\varphi(\mu_j)\right).\\
\end{split}
\end{equation*}
Let $Y$ be a discrete random variable with distribution $P(Y=\mu_j)=\eta_j, j=1,2,\ldots,m$.
It is easy to see that $EY = EX$.  It follows by Theorem 1 that
\begin{equation*}
\sum_{j=1}^m\eta_j\varphi(\mu_j) = E\left[\varphi \left(Y\right)\right]\ge \varphi(EY)+\inf_{y\in[\mu_1,\mu_m]} h(y;\mu_y)\text{var}(Y).
\end{equation*}
We can also apply Theorem 1 to each $E[\varphi(X \mid X\in I_j)]-\varphi(\mu_j)$ term: 
\begin{equation*}
\label{eq:lower2}
E[\varphi(X \mid X\in I_j)]-\varphi(\mu_j)\ge \inf_{x\in I_j} h(x;\mu_j)\text{var}(X\mid X\in I_j).
\end{equation*}
Combining the above two equations, we have
\begin{equation}
\label{eq:lower3}
E[\varphi(X)]- \varphi(EX) \ge  \inf_{y\in[\mu_1,\mu_m]} h(y;\mu_y)\text{var}(Y) + \sum_{j=1}^m \eta_j \inf_{x\in I_j} h(x;\mu_j)\text{var}(X\mid X\in I_j).
\end{equation}
Replacing $\inf$ by $\sup$ in the righthand side gives the upper bound.

The Jensen gap on the left side of \eqref{eq:lower3} is positive if any of the $m+1$ terms on the right is positive. In particular, the Jensen gap is positive if there exists an interval $I\subset(a,b)$ that satisfies $\inf_{x\in I}\varphi''(x)>0$, $P(X\in I)>0$ and $\text{var}(X\mid X\in I)>0$.  Note that a finer partition does not necessarily lead to a sharper lower bound in \eqref{eq:lower3}. The focus of the partition should therefore be on isolating the part of interval $(a,b)$ in which $\varphi''(x)$ is close to 0. 

Consider example $X{\sim}N\left(\mu ,\sigma ^2\right)$ with $\mu=0$ and $\sigma=1$ and $\varphi(x)=e^x$. We divide $(-\infty,\infty)$ into three intervals with equal probabilities. This gives
\begin{table}[H]
\centering
\begin{tabular}{|c|c|c|c|c|c|}
\hline
$I_j$ & $\eta_j$ & $E[X\mid X\in I_j]$ & $\text{var}(X\mid X\in I_j)$ & $\inf_{x\in I_j} h(x;\mu_j)$ & $\sup_{x\in I_j} h(x;\mu_j)$\\
\hline
$(-\infty,-.431)$ & 1/3 & -1.091 & 0.280 & 0.000 & 0.212\\ 
\hline
$(-0.431,0.431)$ &1/3 & 0.000 & 0.060 & 0.435 & 0.580\\
\hline
$(0.431,\infty)$& 1/3 & 1.091 & 0.280 & 1.209 & $\infty$\\
\hline
\end{tabular}
\end{table}
\noindent The actual Jensen gap is $e^{\mu+\frac{\sigma}{2}}-e^{\mu}=0.649$.  The lower bound from \eqref{eq:lower3}
 is 0.409, which is a huge improvement over Jensen's bound of 0.  The upper bound $\infty$, however, provides no improvement over Theorem 1.  

To summarize, this paper proposes a new sharpened version of the Jensen's inequality. The proposed bound is simple and insightful, is broadly applicable by imposing minimum assumptions on $\varphi(x)$, and provides fairly accurate result in spite of its simple form. It can be incorporated in any calculus-based statistical course.  

\bibliographystyle{chicago}
\bibliography{jensen}

\end{document}